\newtheorem{theorem}{Theorem}[section]
\newtheorem{corollary}[theorem]{Corollary}
\newtheorem{proposition}[theorem]{Proposition}
\theoremstyle{definition}
\newtheorem{definition}[theorem]{Definition}
\theoremstyle{definition}
\theoremstyle{definition}
\newtheorem{example}[theorem]{Example}
\numberwithin{equation}{subsection}
\begin{document}

\title[WEAKLY AND COMPLETELY NILARY IDEALS]
 {WEAKLY AND COMPLETELY NILARY IDEALS}

\author{O. A. AL-MALLAH}

\address{Department of Mathematics, College of Science, King Faisal\\ University, Al-Hasa, P.O. Box 380,
 Saudi Arabia}

\email{omarmallah@yahoo.com}

\author{H. M. AL-NOGHASHI}

\address{Department of Mathematics, College of Science, King Faisal\\ University, Al-Hasa, P.O. Box 380,
 Saudi Arabia}
\email{alnoghashi@hotmail.com}

\thanks{}

\thanks{}

\subjclass{Primary ???; Secondary ???}

\keywords{completely nilary, completely
prime, weakly nilary ideals, weakly prime ideals}

\date{}

\dedicatory{}

\commby{}

% -----------------------------------------------------------------------------

\begin{abstract}
The present paper introduces and studies some new types of rings and ideals such as completely nilary rings ( resp. completely nilary ideals ), weakly nilary ideals. Some properties of each are obtained
and some characterizations of each type are given.
\end{abstract}

% -----------------------------------------------------------------------------
\maketitle
% -----------------------------------------------------------------------------

\section*{Introduction}

Let us say that an ideal $I$ in a ring $A$ is \textbf{\textit{completely
prime}} if $A/I$ is a domain. We say $I$ is \textit{\textbf{completely
semiprime}} if  for every $a\in A$ such that $a^n\in I$ for some
$n\in\mathbb{N}$ then $a\in I.$ For more about semiprime and completely
semiprime ideals see~\cite[p.~66]{McCoy1964} or~\cite{McCoy1973}
respectively.

Let $A$ be a ring and $I\unlhd A.$ The ideal $I$ is called a \textbf{\textit{(principally) right primary ideal}} if whenever $J$ and $K$ are (principal) ideals of $A$ with $JK\subseteq I,$ then either $J\subseteq I$ or $K^n \subseteq I$ for some positive integer $n$ depending on $J$ and $K$. The ideal $I$ is called a \textbf{\textit{(principally) nilary ideal}} if whenever $J$ and $K$ are (principal) ideals of $A$ with $JK\subseteq I$,  then either $J^{m}\subseteq I$ or $K^{n}\subseteq I$ for some positive integers $m$ and $n$ depending on $J$ and $K.$ $A$ is said to be a \textbf{\textit{(principally) right primary ring}} or  \textbf{\textit{(principal) nilary ring}} if the zero ideal is a (principal) right primary or a (principal) nilary ideal of $A,$ respectively. See~\cite[Definition~1.1]{Birkenmeier2013133}

Throughout this paper, $A$ is a ring, need not be commutative and not necessarily contains identity,
unless otherwise stated and now, we introduce the following definitions and giving some examples.

\section{COMPLETELY NILARY}

In this section we introduce a new class of rings, called completely nilary ideal.

\begin{definition}
We call an ideal $I$ of a ring $A$ a \textbf{\textit{completely nilary
ideal}}  if $a,b\in A$ with $ab\in I$ then $a^n\in I$ or $b^m\in I$ for some
$n,m\in\mathbb{N}$ and we call $A$ a \textit{\textbf{completely nilary ring}}
if the zero ideal of $A$ is a completely nilary ideal, equivalently if
$a,b\in A$ such that $ab=0,$ then $a$ is nilpotent or $b$ is nilpotent.
\end{definition}

These conditions is called completely nilary as their relation to the nilary
conditions is the same as the relation between completely prime and prime.
See~\cite{McCoy1973}.

Just as a nilpotent ring must be (p-)nilary, note that a nil ring will always be both completely nilary.
However nil rings need not be (p-)nilary. Now, we give the following example to
establish this fact.

If $S$ and $T$ are two simple nil rings which are not nilpotent(such rings always exist,
see~\cite{Smoktunowicz2002}), then $A=S\oplus T$ is a nil ring, identifying $S$ and $T$ as ideals of $A$
we have $ST=0.$ Since $S$ and $T$ are simple they are principal and since they are not nilpotent, $A$ is
not a p-nilary ring (and hence not a nilary ring), but on the other hand as $A$ is nil, each of its
elements is nilpotent, so $A$ is trivially a completely nilary ring. Now, it is the time to give some
characterizations of nilary ideals (nilary rings), p-nilary ideals (p-nilary rings) and completely nilary
ideals (completely nilary rings).

It is necessary to mention that every prime ring is a nilary ring, since if
$A$ is a prime ring and $I,J$ are ideals of $A$ such that $IJ=0$ and if $I$
is not nilpotent, then $I\neq0,$ so there exists $0\neq a\in I.$ Now, for all
$b\in J,$ we have $aAb\subseteq IJ=0$ and as $A$ is a prime ring, we get
$a=0$ or $b=0$ and as $a\neq0,$ we have $b=0,$ so that $J=0$ and thus $J$ is
nilpotent. Also, it can be shown that completely nilary rings are independent
from p-nilary rings and nilary rings. Now, we give the following example to
establish this fact.

It is known that $\mathbb{Z}_2$ is a prime ring and one can easily check that
$M_{2\times2}[\mathbb{Z}_2]$ is also prime and thus it is a nilary ring and
hence a p-nilary ring, but it is not a completely nilary ring since if we
take $I=\left[
                        \begin{array}{cc}
                          1 & 0\\
                          0 & 0 \\
                        \end{array}
                      \right], J=\left[
                        \begin{array}{cc}
                          0 & 0 \\
                          0 & 1 \\
                        \end{array}
                      \right]
\in M_{2\times2}[\mathbb{Z}_2],$ then clearly $IJ=0$ but neither $I$ is
nilpotent nor $J.$ Note that previous example also illustrates that
 right (left) primary does not imply completely right (left) primary.

\begin{proposition}
Let $A$ be a ring and $I\unlhd A.$ Then\\
$I$ is a completely prime ideal if and only if $I$ is a completely semiprime
and completely nilary ideal.
\end{proposition}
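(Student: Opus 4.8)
The plan is to verify both implications directly from the definitions, since ``completely nilary'' differs from ``completely prime'' only in that it permits passing to powers, while ``completely semiprime'' is precisely the condition that prevents such powers from escaping the ideal. So the two notions should fit together as claimed with no serious work.

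For the forward direction, assume $I$ is completely prime, i.e.\ $ab\in I$ forces $a\in I$ or $b\in I$. This is already the completely nilary condition with exponents $n=m=1$, so $I$ is completely nilary. To see that $I$ is completely semiprime, suppose $a^n\in I$ for some $n\in\mathbb{N}$; writing $a^n=a\cdot a^{n-1}$ and invoking complete primeness gives $a\in I$ or $a^{n-1}\in I$, and a short downward induction on $n$ yields $a\in I$. Hence $I$ is completely semiprime.

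For the converse, assume $I$ is completely semiprime and completely nilary, and let $ab\in I$. By complete nilarity there are $n,m\in\mathbb{N}$ with $a^n\in I$ or $b^m\in I$. Applying complete semiprimeness to whichever of these holds gives $a\in I$ or $b\in I$, which is exactly the definition of $I$ being completely prime.

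Since every step is a one-line deduction from the relevant definition, I do not expect any real obstacle; the only point needing even minor care is the induction in the forward direction, and that collapses immediately because complete primeness of $I$ is stable under the factorization $a^n=a\cdot a^{n-1}$.
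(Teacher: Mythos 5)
Your proof is correct and follows essentially the same route as the paper: the backward direction is word-for-word the paper's argument (complete nilarity gives $a^n\in I$ or $b^m\in I$, then complete semiprimeness pulls back to $a\in I$ or $b\in I$). The only difference is that you spell out, via the short downward induction on the factorization $a^n=a\cdot a^{n-1}$, the forward direction that the paper dismisses with ``Clearly,'' which is a harmless and correct elaboration.
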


\begin{proof}
$(\Rightarrow)$ Clearly.

$(\Leftarrow)$ Let $a,b\in A$ with $ab\in I.$ Since $I$ is a completely
nilary ideal, then $a^n\in I$ or $b^m\in I$ for some $n,m\in\mathbb{N}.$
Since $I$ is a completely semiprime ideal, then $a\in I$ or $b\in I.$ Hence
$I$ is a completely prime ideal.
\end{proof}

\begin{proposition}Let $Q_1, . . ., Q_n$ be completely nilary ideals of $A$ such that $Q ^s_k\subseteq Q_1\cap\cdots\cap Q_n$ for some fixed $1\leq k\leq n$ and some positive integer
$s.$ Then $Q_1\cdots Q_n$ is a completely nilary ideal of $A.$ Hence
if $Q$ is a completely nilary ideal of $A,$ then so is $Q^n,$ and
$A/Q^n$ is a nilary ring for each positive integer $n.$
\end{proposition}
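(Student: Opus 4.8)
The plan is to reduce the whole statement to understanding the single ideal $J:=Q_1\cap\cdots\cap Q_n$, via two elementary containments. Write $P:=Q_1Q_2\cdots Q_n$. Since each $Q_j$ is a two--sided ideal, the product $P$ is contained in each of its factors, i.e.\ $P\subseteq Q_i$ for every $i$ (no identity is needed: $Q_1\cdots Q_n\subseteq AQ_iA\subseteq Q_i$), hence $P\subseteq J$. Conversely, since $J\subseteq Q_i$ for every $i$, we get $J^n=J\cdot J\cdots J\subseteq Q_1Q_2\cdots Q_n=P$. Thus $P\subseteq J$ and $J^n\subseteq P$, so once $J$ is shown to be completely nilary the conclusion for $P$ follows formally.

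The heart of the argument is to show that $J$ is completely nilary, and this is exactly where the hypothesis $Q_k^s\subseteq J$ is used. Let $a,b\in A$ with $ab\in J$. Then in particular $ab\in Q_k$, and since $Q_k$ is completely nilary, either $a^{p}\in Q_k$ for some $p\in\mathbb{N}$ or $b^{q}\in Q_k$ for some $q\in\mathbb{N}$. In the first case $a^{ps}=(a^{p})^{s}\in Q_k^s\subseteq J$; in the second case $b^{qs}=(b^{q})^{s}\in Q_k^s\subseteq J$. Either way a power of $a$ or a power of $b$ lies in $J$, so $J$ is completely nilary. Passing back to $P$: if $a,b\in A$ with $ab\in P$, then $ab\in J$, so (say) $a^{t}\in J$ for some $t$, and hence $a^{tn}=(a^{t})^{n}\in J^n\subseteq P$ (and symmetrically for $b$). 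Therefore $P=Q_1\cdots Q_n$ is completely nilary.

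For the ``hence'' clause I would specialise to $Q_1=\cdots=Q_n=Q$: each $Q_i$ is completely nilary and $Q_k^{1}=Q\subseteq Q=Q_1\cap\cdots\cap Q_n$, so the hypothesis holds with $s=1$, and the first part yields that $Q^n$ is completely nilary; equivalently, $A/Q^n$ is a completely nilary ring. To strengthen this to ``$A/Q^n$ is a \emph{nilary} ring'' one must show $Q^n$ is a nilary ideal, i.e.\ that $JK\subseteq Q^n$ for ideals $J,K$ of $A$ forces $J^{m}\subseteq Q^n$ or $K^{m}\subseteq Q^n$. The natural route is to apply the completely nilary condition to the products $ab$ with $a\in J$, $b\in K$ (all of which lie in $Q^n$) and to split into cases according to which of $a,b$ acquires a power in $Q^n$; one then finds that every element of one of the two ideals, say $K$, has some power in $Q^n$.

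The step I expect to be the main obstacle is the last one: promoting ``every element of $K$ has a power in $Q^n$'' to ``$K^{m}\subseteq Q^n$ for a single $m$'', that is, passing from nil to nilpotent. This is exactly the gap between the completely nilary and the nilary conditions, and as the paper's own example $A=S\oplus T$ of two simple non--nilpotent nil rings already shows, it cannot be bridged in general: there $0$ is a completely nilary ideal but $A$ is not a nilary ring, so the assertion read with $Q=0$ and $n=1$ would say ``$A$ is nilary''. Accordingly I would expect the nilary conclusion either to require an extra hypothesis on $A$ forcing the relevant nil ideals to be nilpotent (for instance a suitable chain condition), or to be intended as the completely nilary statement established above; carrying the promotion through in the present non-unital generality is the delicate point.
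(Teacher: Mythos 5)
Your argument for the main claim is correct and is essentially the paper's own proof, merely routed through the intersection $J=Q_1\cap\cdots\cap Q_n$: the paper takes $ab\in Q_1\cdots Q_n\subseteq Q_k$, applies the completely nilary hypothesis to $Q_k$ to get $a^t\in Q_k$ or $b^m\in Q_k$, and then uses $Q_k^{ns}=(Q_k^s)^n\subseteq(Q_1\cap\cdots\cap Q_n)^n\subseteq Q_1\cdots Q_n$, which is exactly your two containments $P\subseteq J$ and $J^n\subseteq P$ compressed into the single exponent $tns$. One cosmetic point: your justification $Q_1\cdots Q_n\subseteq AQ_iA\subseteq Q_i$ does not quite parse for the edge factors when there is no identity; for $i=1$ use $Q_1(Q_2\cdots Q_n)\subseteq Q_1A\subseteq Q_1$ and for $i=n$ use $AQ_n\subseteq Q_n$. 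The specialization $Q_1=\cdots=Q_n=Q$ with $s=1$ for the ``hence'' clause is also what is intended.

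Your suspicion about the final assertion is justified, and it is not a gap on your side. The paper's proof stops once the product is shown to be completely nilary and never addresses the claim that $A/Q^n$ is a \emph{nilary} ring; as you note, that claim is false as stated: take $Q=0$ in $A=S\oplus T$ with $S,T$ simple non-nilpotent nil rings, where $Q$ is completely nilary but $A/Q^n=A$ is not nilary (not even p-nilary), by the paper's own discussion in Section~1. The clause is evidently carried over from the analogous statement for nilary ideals in the cited literature, and what actually follows from the first part is that $A/Q^n$ is a \emph{completely} nilary ring, via Proposition~\ref{2.3}. So your proof establishes everything that the paper's proof establishes, and your diagnosis of why the nilary conclusion cannot be obtained without extra hypotheses (e.g.\ nil ideals being nilpotent, as in Proposition~\ref{Jabbar:Th. 2.20}) is accurate.
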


\begin{proof}
Assume that $a,b\in A$ such that $ab\in Q_1\cdots Q_n\subseteq Q_k.$ Since $Q_k$ is completely nilary, then either $a^t\in Q_k$ or $b^m\in Q_k$ for some $t,m\in\mathbb{N}.$ Thus  $a^{tns}\in Q_k^{ns}\subseteq Q_1\cdots Q_n$ or $b^{mns}\in Q_k^{ns}\subseteq Q_1\cdots Q_n.$
\end{proof}

\begin{proposition}\label{2.3} Let $A$ be a ring and $I\unlhd A.$  Then:\\
$I$ is a completely nilary ideal if and only if $A/I$ is a completely
nilary ring.
\end{proposition}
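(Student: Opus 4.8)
The plan is to prove the equivalence directly from the definition of a completely nilary ideal by translating the membership condition ``$ab\in I$'' in $A$ into the zero condition ``$\bar a\bar b=0$'' in the quotient $A/I$, using the canonical surjection $\pi:A\to A/I$. First I would unwind what it means for $A/I$ to be a completely nilary ring: by Definition~1.1 (the zero-ideal case), $A/I$ is completely nilary exactly when for all $\bar a,\bar b\in A/I$ with $\bar a\bar b=0$, there are $n,m\in\mathbb{N}$ with $\bar a^{\,n}=0$ or $\bar b^{\,m}=0$.

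For the forward direction, assume $I$ is completely nilary. Take $\bar a,\bar b\in A/I$ with $\bar a\bar b=0$; lift them to $a,b\in A$ with $\pi(a)=\bar a$, $\pi(b)=\bar b$. Then $\pi(ab)=\bar a\bar b=0$ means $ab\in I$, so by hypothesis $a^{n}\in I$ or $b^{m}\in I$ for some $n,m$, and applying $\pi$ gives $\bar a^{\,n}=0$ or $\bar b^{\,m}=0$. Hence $A/I$ is a completely nilary ring. For the converse, assume $A/I$ is completely nilary and let $a,b\in A$ with $ab\in I$. Then $\pi(a)\pi(b)=\pi(ab)=0$ in $A/I$, so $\pi(a)^{n}=0$ or $\pi(b)^{m}=0$ for some $n,m$, i.e. $a^{n}\in I$ or $b^{m}\in I$. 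Thus $I$ is completely nilary.

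The only thing to be a little careful about is that $A$ need not have an identity, so $A/I$ is just a (possibly non-unital) ring; but the argument above never uses a unit — it only uses that $\pi$ is a ring homomorphism with kernel $I$, that $\pi$ is surjective (so lifts exist), and that $\pi$ is multiplicative (so $\pi(x^k)=\pi(x)^k$). So there is really no substantive obstacle here: the statement is essentially a bookkeeping exercise in passing conditions through the quotient map, entirely parallel to the classical fact that $I$ is completely prime iff $A/I$ is a domain. I would keep the write-up to the two short paragraphs above.
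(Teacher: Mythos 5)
Your proof is correct and takes essentially the same approach as the paper: translating the condition $ab\in I$ into $\bar a\,\bar b=0$ in $A/I$ via the canonical quotient map and passing powers back and forth, exactly as the paper's coset computation does. In fact you are slightly more complete than the paper, whose written proof only covers the direction ``$A/I$ completely nilary ring $\Rightarrow$ $I$ completely nilary ideal'' and leaves the forward implication unstated.
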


\begin{proof}
Suppose $A/I$ is a completely nilary ring. Let $a, b\in A,$ with $ab\in I.$
Therefore $(a+I)(b+I)\in I,$ so $[(a+I)/I][(b+I)/I]=0.$ Hence either
$[(a+I)/I]^n=0$ for some positive integer $n$ or $[(b+I)/I]^m=0$ for some
positive integer $m.$ Then either $a^n\in I$ or $b^m\in I.$ Hence $I$ is a completely nilary ideal of $A.$
\end{proof}

\begin{proposition}\label{surjective:1}
Let $\phi: A\rightarrow\bar{A}$ be a surjective homomorphism, and let
$I\unlhd A$ with
$Ker(\phi)\subseteq I.$ Then:\\
If $I$ is completely nilary in $A,$ then $\phi(I)$ is completely nilary
of $\bar{A}.$
\end{proposition}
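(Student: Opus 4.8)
The plan is to argue directly, transferring a factorization in $\bar A$ back along $\phi$ into $A$ and using the hypothesis $Ker(\phi)\subseteq I$ to stay inside $I$. First I would record that $\phi(I)$ really is an ideal of $\bar A$: since $\phi$ is surjective, the image of an ideal is again an ideal, so the assertion that $\phi(I)$ is completely nilary is meaningful.

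Next, take arbitrary $\bar a,\bar b\in\bar A$ with $\bar a\bar b\in\phi(I)$. By surjectivity choose $a,b\in A$ with $\phi(a)=\bar a$ and $\phi(b)=\bar b$, and choose $c\in I$ with $\phi(c)=\bar a\bar b=\phi(ab)$. Then $ab-c\in Ker(\phi)\subseteq I$, and since $c\in I$ this gives $ab\in I$. This is the single place where the hypothesis $Ker(\phi)\subseteq I$ is used, and it is the crux of the argument.

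Now apply the assumption that $I$ is completely nilary to $ab\in I$: there are $n,m\in\mathbb{N}$ with $a^n\in I$ or $b^m\in I$. Applying the ring homomorphism $\phi$ yields $\bar a^{\,n}=\phi(a)^n=\phi(a^n)\in\phi(I)$ or $\bar b^{\,m}=\phi(b)^m=\phi(b^m)\in\phi(I)$. Since $\bar a,\bar b$ were arbitrary, $\phi(I)$ is a completely nilary ideal of $\bar A$.

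As an alternative one could instead invoke Proposition~\ref{2.3} together with the isomorphism $\bar A\cong A/Ker(\phi)$ and the ideal correspondence, reducing the claim to the fact that the image of a completely nilary ideal under the canonical projection $A\to A/J$ with $J\subseteq I$ is completely nilary; but this merely repackages the same computation, so the direct route above is cleaner. No real obstacle is expected here — the only point requiring care is to never assume that $\phi$ is injective.
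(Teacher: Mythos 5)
Your proof is correct, and it takes a genuinely different route from the paper. You argue directly at the level of elements: lift $\bar a,\bar b$ and a witness $c\in I$ along the surjection, use $ab-c\in Ker(\phi)\subseteq I$ to conclude $ab\in I$, apply the hypothesis on $I$, and push $a^n$ or $b^m$ forward through $\phi$. The paper instead identifies $\bar A$ with $A/K$ where $K=Ker(\phi)$, notes $\phi(I)=I/K$, invokes the third isomorphism theorem $A/I\cong(A/K)/(I/K)$, and passes back and forth through the quotient characterization of Proposition~\ref{2.3} (completely nilary ideal $\Leftrightarrow$ completely nilary quotient ring). The two arguments encode the same computation, but yours is self-contained and slightly more robust: the paper's route uses the forward direction of Proposition~\ref{2.3} ($I$ completely nilary $\Rightarrow$ $A/I$ completely nilary), a direction whose proof is not actually written out there, whereas your direct chase needs nothing beyond the definitions and the standard fact (which you rightly flag) that the image of an ideal under a surjective homomorphism is an ideal. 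The paper's approach, in exchange, makes the structural content transparent --- the statement is just Proposition~\ref{2.3} transported along the canonical isomorphism --- and your closing remark shows you see that reduction as well.
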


\begin{proof}
Let $K=Ker(\phi),$ $\bar{A}=A/K.$  (i) Since $I$ is a completely nilary ideal of $A,$
then $A/I$ is a completely nilary ring, by Proposition~\ref{2.3}. Now, since
$A/I\cong(A/K)/(I/K)$ and $A/I$  is a completely nilary ring, then $(A/K)/(I/K)$ is a
completely nilary ring. Thus, $I/K$ is a completely nilary ideal of $A/K,$ by
Proposition~\ref{2.3}. But $\phi(I)=I/K.$ Hence $\phi(I)$ is a completely nilary ideal
in $\bar{A}.$
\end{proof}

\begin{proposition}\label{surjective:2}
Let $\phi: A\rightarrow\bar{A}$ be a surjective homomorphism and
let $I'\unlhd\bar{A}$ with $I=\phi^{-1}(I').$ Then:\\
If $I'$ is completely nilary in $\bar{A},$ then $I$ is completely
nilary of $A.$
\end{proposition}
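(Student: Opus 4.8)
The plan is to argue directly with elements, mirroring the proof of Proposition~\ref{2.3}, and to note in passing that the isomorphism theorems give an alternative route. First I would take $a,b\in A$ with $ab\in I=\phi^{-1}(I')$. Applying the ring homomorphism $\phi$ gives $\phi(a)\phi(b)=\phi(ab)\in I'$. Since $I'$ is completely nilary in $\bar{A}$, there are $n,m\in\mathbb{N}$ with $\phi(a)^n\in I'$ or $\phi(b)^m\in I'$; that is, $\phi(a^n)\in I'$ or $\phi(b^m)\in I'$. Pulling back through $\phi$ then yields $a^n\in\phi^{-1}(I')=I$ or $b^m\in\phi^{-1}(I')=I$, which is precisely the defining condition for $I$ to be a completely nilary ideal of $A$.

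Alternatively, one can invoke Proposition~\ref{2.3} twice. The key observation is that $Ker(\phi)\subseteq\phi^{-1}(I')=I$, so $\phi$ induces a well-defined surjective homomorphism $\bar{\phi}\colon A/I\to\bar{A}/I'$, and the hypothesis $I=\phi^{-1}(I')$ forces $\bar{\phi}(a+I)=0\iff\phi(a)\in I'\iff a\in I$, so $\bar{\phi}$ is an isomorphism. Then the chain ``$I'$ completely nilary in $\bar{A}$ $\Rightarrow$ $\bar{A}/I'$ is a completely nilary ring (Proposition~\ref{2.3}) $\Rightarrow$ $A/I\cong\bar{A}/I'$ is a completely nilary ring $\Rightarrow$ $I$ is a completely nilary ideal of $A$ (Proposition~\ref{2.3} again)'' finishes the argument.

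There is essentially no hard step here: both arguments are routine element/diagram chases. The only point needing a line of care is the verification that the induced map $A/I\to\bar{A}/I'$ is injective — equivalently, that any $a\in A$ with $\phi(a)\in I'$ already lies in $I$ — and this is immediate from $I=\phi^{-1}(I')$. I would present the direct element-chasing argument as the proof, since it avoids even setting up the quotient isomorphism, and perhaps remark that it is the natural dual of Proposition~\ref{surjective:1}.
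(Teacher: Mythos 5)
Your direct element-chasing argument is correct and is essentially the same proof the paper gives: push $ab\in I$ through $\phi$, use that $I'$ is completely nilary in $\bar{A}$, and pull back via $I=\phi^{-1}(I')$. The alternative route through Proposition~\ref{2.3} and the induced isomorphism $A/I\cong\bar{A}/I'$ is also valid but adds nothing beyond the paper's (and your) first argument.
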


\begin{proof}
Let $\bar{A}=A/K,$  (i) Since $I=\phi^{-1}(I')$  and $\phi$ is a surjective
then $\phi(I)=\phi(\phi^{-1}(I'))=I'$ implies that $\phi(I)=I'.$ Now, let
$a,b\in A$ with $ab\in I$ then
$\phi(ab)\in\phi(I)=I'$ implies that $\phi(a)\phi(b)\in
I'$ and since $I'$ is a completely nilary ideal of $A/K$ then either
$\phi(a)^n\in I'$ or $\phi(b)^m\in I'$ for some
$n,m\in\mathbb{N}.$ If $\phi(a)^n\in I'.$ This implies that
$\phi(a^n)\in I'$ hence
$\phi^{-1}(\phi(a^n))\in\phi^{-1}(I')$ implies that
$a^n\in\phi^{-1}(\phi(a^n))\subseteq\phi^{-1}(I')=I.$ Hence
$a^n\in I.$ Similarly, If $\phi(b)^m\in I'$ then
$b^m\subseteq I.$ Therefore $I$ is completely nilary in $A.$
\end{proof}

\begin{corollary}Let $A$ be a ring and $I,K\unlhd A$ with $K\subseteq I.$\\
$I$ is a completely nilary ideal of $A$ iff $I/K$ is a
completely nilary ideal of $A/K.$
\end{corollary}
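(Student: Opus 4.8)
The plan is to deduce this corollary directly from the two surjective-homomorphism propositions just proved, applied to the canonical quotient map. Let $\phi\colon A\to A/K$ be the natural projection, which is a surjective ring homomorphism with $\operatorname{Ker}(\phi)=K$. Since $K\subseteq I$, the ideal $I$ satisfies the hypothesis $\operatorname{Ker}(\phi)\subseteq I$ of Proposition~\ref{surjective:1}, and moreover $\phi(I)=I/K$.

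For the forward implication, suppose $I$ is completely nilary in $A$. By Proposition~\ref{surjective:1}, $\phi(I)=I/K$ is completely nilary in $A/K$, which is exactly what we want. For the converse, suppose $I/K$ is completely nilary in $A/K$. Set $I'=I/K\unlhd A/K$; then $\phi^{-1}(I')=\phi^{-1}(I/K)=I$ precisely because $K\subseteq I$ (every coset in $I/K$ pulls back into $I$, and nothing outside $I$ does). Applying Proposition~\ref{surjective:2} with this $I'$, we conclude that $I=\phi^{-1}(I')$ is completely nilary in $A$. Combining the two directions gives the stated equivalence.

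I expect the only point needing a word of care — the ``main obstacle,'' though it is minor — is the verification that $\phi^{-1}(I/K)=I$, i.e. that the correspondence-theorem identifications line up so that $I'$ as chosen in the converse direction really does pull back to $I$ and not to something larger; this is where the assumption $K\subseteq I$ is used, and it should be noted explicitly. Everything else is a direct citation of the preceding two propositions, so the proof is short. Alternatively, one could bypass the propositions and argue straight from Proposition~\ref{2.3} together with the isomorphism $(A/K)/(I/K)\cong A/I$, observing that $I$ is completely nilary iff $A/I$ is a completely nilary ring iff $(A/K)/(I/K)$ is a completely nilary ring iff $I/K$ is completely nilary in $A/K$; I would likely present the proof in this second, self-contained form since it is the cleanest.
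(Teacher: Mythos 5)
Your proof is correct and follows essentially the same route as the paper: apply Proposition~\ref{surjective:1} to the canonical projection $\phi\colon A\to A/K$ for the forward direction, and Proposition~\ref{surjective:2} with $I'=I/K$ for the converse. Your explicit check that $\phi^{-1}(I/K)=I$ (using $K\subseteq I$) is in fact slightly more careful than the paper, which cites Proposition~\ref{surjective:2} for this equality without verifying it.
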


\begin{proof}
($\Rightarrow$) See Proposition~\ref{surjective:1}.

($\Leftarrow$) Assume that $I/K$ is a completely nilary ideal of $A/K.$ Put
$I'=I/K.$ By Proposition~\ref{surjective:2} then $\phi^{-1}(I')=I.$ Hence $I$
is a completely nilary ideal of $A.$
\end{proof}

\begin{proposition} Let $A$ be a ring and $I\unlhd A.$\\
$A/I$ is a completely nilary ring and $I$ is nil, then  $A$ is a completely
nilary ring.
\end{proposition}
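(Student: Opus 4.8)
The plan is to verify directly the defining property of a completely nilary ring for $A$, namely that $ab=0$ forces $a$ or $b$ to be nilpotent. So I would begin by fixing $a,b\in A$ with $ab=0$ and pushing the relation into the quotient: since $0\in I$ we have $ab\in I$, hence $(a+I)(b+I)=0$ in $A/I$. By hypothesis $A/I$ is a completely nilary ring, so one of $a+I$, $b+I$ is nilpotent in $A/I$; say $(a+I)^n=0$ for some $n\in\mathbb{N}$, which is exactly the statement $a^n\in I$ (the case $b^m\in I$ being handled symmetrically).

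The second step uses that $I$ is nil: the element $a^n$ lies in $I$ and is therefore nilpotent in $A$, so $(a^n)^k=0$ for some $k\in\mathbb{N}$, whence $a^{nk}=0$ and $a$ is nilpotent in $A$. Running the mirror-image argument when instead $b^m\in I$ shows $b$ is nilpotent. In either case one of $a,b$ is nilpotent, so the zero ideal of $A$ is completely nilary, i.e.\ $A$ is a completely nilary ring. One could equally well phrase this by first invoking Proposition~\ref{2.3} to say that ``$A/I$ completely nilary'' means $I$ is itself a completely nilary ideal of $A$, and then apply the same two-line argument starting from $ab\in I$.

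I do not expect a genuine obstacle here: the proof is a short chase through the definitions, and the only point that needs even a flicker of attention is the bookkeeping of nilpotency exponents when passing from ``$a^n\in I$ with $I$ nil'' to ``$a$ nilpotent'' (i.e.\ composing the two exponents). The argument uses neither an identity element nor commutativity, in keeping with the standing conventions of the paper, and it is the nilness of $I$ — rather than, say, mere complete semiprimeness — that is exactly what lets nilpotency in $A/I$ be lifted back to nilpotency in $A$.
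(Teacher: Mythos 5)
Your proposal is correct and follows essentially the same route as the paper's proof: pass $ab=0$ into $I$, use that $A/I$ is completely nilary (equivalently, via Proposition~\ref{2.3}, that $I$ is a completely nilary ideal) to get $a^n\in I$ or $b^m\in I$, then use nilness of $I$ to lift nilpotency back to $A$. Your explicit bookkeeping of the exponent composition $(a^n)^k=0\Rightarrow a^{nk}=0$ is a small but welcome addition over the paper's terser statement.
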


\begin{proof}
Let $a,b\in A$ with $ab=0.$ Then $ab\in I.$ Since $A/I$ is a completely
nilary ring, then $I$ is a completely nilary ideal, by
Proposition~\ref{2.3}(iii). Hence $a^n\in I$ or $b^m\in I$ for some
$n,m\in\mathbb{N}.$ Since $I$ is nil, then $a^n$ and $b^m$ are nilpotent,
implies that $a$ and $b$ are nilpotent. Hence $A$ is a completely nilary
ring.
\end{proof}

\begin{proposition}\label{completely:p-nilary}
If $A$ is a commutative ring, then $A$ is a p-nilary ring if and only if it
is a completely nilary ring.
\end{proposition}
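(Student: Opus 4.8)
The plan is to unwind both ``p-nilary ring'' and ``completely nilary ring'' into statements about annihilation and nilpotence, using commutativity to describe the principal ideal generated by $a$ as $(a)=aA+\mathbb{Z}a$ (no identity is assumed, so keeping the $\mathbb{Z}a$ summand will matter). Recall that $A$ is p-nilary iff whenever $J,K$ are principal ideals with $JK=0$ one has $J^m=0$ or $K^n=0$, and $A$ is completely nilary iff $ab=0$ forces $a$ or $b$ to be nilpotent.

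For the forward implication, suppose $A$ is p-nilary and take $a,b\in A$ with $ab=0$. Put $J=(a)$ and $K=(b)$. For a typical generator $x=ar+na\in J$ and $y=bs+mb\in K$, expanding $xy$ and using commutativity (so that $arb=(ab)r$, $arbs=(ab)(rs)$, etc.) shows every such $xy$ is built from $ab=0$, hence $JK=0$. The hypothesis then gives $J^{m}=0$ or $K^{n}=0$ for some $m,n\in\mathbb{N}$. Since the product of $m$ copies of $a\in J$ lies in $J^{m}$, we have $a^{m}\in J^{m}$, and likewise $b^{n}\in K^{n}$; therefore $a^{m}=0$ or $b^{n}=0$, i.e. $A$ is completely nilary.

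For the converse, suppose $A$ is completely nilary and let $J,K$ be principal ideals with $JK=0$, say $J=(a)$ and $K=(b)$. Then $ab\in JK=0$, so by hypothesis one of $a,b$ is nilpotent; say $a^{p}=0$. The remaining point is the inclusion $(a)^{k}\subseteq a^{k}A+\mathbb{Z}a^{k}$, valid in any commutative ring, which I would prove by induction on $k$: the case $k=1$ is the description of $(a)$ above, and for the step one expands a generator of $(a)^{k+1}=(a)^{k}(a)$, collecting terms via commutativity into $a^{k+1}A+\mathbb{Z}a^{k+1}$. Applying this with $k=p$ yields $J^{p}=(a)^{p}\subseteq a^{p}A+\mathbb{Z}a^{p}=0$, and symmetrically if instead $b$ is nilpotent. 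Hence $A$ is p-nilary.

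The only real obstacle is the absence of an identity: with a unit element one would have $(a)=aA$ and $(a)^{k}=a^{k}A$ directly, making both inclusions immediate; without it, the $\mathbb{Z}a$ part must be carried through, which is precisely the content of the short induction above. Everything else is a direct translation between the two definitions.
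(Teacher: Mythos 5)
Your argument is correct and follows essentially the same route as the paper's proof: pass between $ab=0$ and $\langle a\rangle\langle b\rangle=0$ using commutativity, then between nilpotence of $a$ and nilpotence of $\langle a\rangle$. The only difference is that you explicitly carry the $\mathbb{Z}a$ summand and prove $\langle a\rangle^{k}\subseteq a^{k}A+\mathbb{Z}a^{k}$ by induction, details the paper leaves implicit in the assertions $\langle a\rangle\langle b\rangle=\langle ab\rangle$ and ``$c^{n}=0$ implies $\langle c\rangle^{n}=0$.''
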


\begin{proof}($\Rightarrow$)
Suppose  $A$ is a p-nilary ring and that $a, b\in A$ with $ab=0.$ Since $A$
is commutative we get $\langle a\rangle\langle b\rangle=\langle ab\rangle=0$
and since $A$ is a p-nilary ring so we get $\langle a\rangle$ is nilpotent or
$\langle b\rangle$ is nilpotent. If $\langle a\rangle$ is nilpotent then
$\langle a\rangle^n=0$ for some $n\in\mathbb{N},$ so $a^n\in\langle
a\rangle^n=0,$ that means $a$ is nilpotent and if $\langle b\rangle$ is
nilpotent by the same argument we get $b$ is nilpotent and thus $A$ is a
completely nilary ring.

($\Leftarrow$)
 Now suppose $A$ is completely nilary and let $c, d\in A$ such that
$\langle c\rangle\langle d\rangle=0.$ Then $cd\in\langle c\rangle\langle
d\rangle=0,$ and since $A$ completely nilary implies $c$ or $d$ is nilpotent,
say $c^n=0$ or $c^m=0$ for some $n,m\in\mathbb{N}.$ If $c^n=0$ then $\langle
c\rangle^n= 0$ because $A$ is commutative. Similarly, if $d^m=0$ then
$\langle d\rangle^m=0.$ Thus $A$ is a p-nilary ring.
\end{proof}

\begin{corollary}
Let $A$ be a ring and $I$ is an ideal of $A.$ If $A/I$ is commutative then
$I$ is a p-nilary ideal if and only if $I$ is a completely nilary ideal.
\end{corollary}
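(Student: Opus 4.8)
The plan is to reduce this corollary to Proposition~\ref{completely:p-nilary} by passing to the quotient ring $A/I$, using the quotient characterizations already established for both completely nilary ideals (Proposition~\ref{2.3}) and, presumably, for p-nilary ideals. First I would record the hypothesis: $A/I$ is commutative. Then I would show the chain of equivalences $I$ is p-nilary in $A$ $\iff$ $A/I$ is a p-nilary ring $\iff$ (by commutativity and Proposition~\ref{completely:p-nilary}) $A/I$ is a completely nilary ring $\iff$ (by Proposition~\ref{2.3}) $I$ is a completely nilary ideal of $A$.

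The middle step is the heart of the matter and also where the main obstacle lies: Proposition~\ref{completely:p-nilary} is stated for \emph{rings} (the zero ideal being p-nilary resp. completely nilary), so to invoke it for $A/I$ I need to know that $A/I$ being a p-nilary ring is equivalent to $I$ being a p-nilary ideal of $A$. This is the exact analogue of Proposition~\ref{2.3} for the p-nilary notion. If such a quotient correspondence for p-nilary ideals has been established earlier in the paper (as the completely nilary version has), I would simply cite it. If it has not, the cleanest route is to prove it inline: given principal ideals $\langle a+I\rangle, \langle b+I\rangle$ of $A/I$ with product in the zero ideal, lift to $\langle a\rangle, \langle b\rangle$ in $A$, note $\langle a\rangle\langle b\rangle\subseteq I$, apply p-nilarity of $I$ to get $\langle a\rangle^m\subseteq I$ or $\langle b\rangle^n\subseteq I$, and push back down; the reverse direction is analogous. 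This correspondence is routine because the principal ideals of $A/I$ are exactly the images of principal ideals of $A$.

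Having the p-nilary quotient correspondence in hand, the rest is immediate: apply Proposition~\ref{completely:p-nilary} to the commutative ring $\bar A = A/I$ to convert ``$\bar A$ is p-nilary'' into ``$\bar A$ is completely nilary,'' and then apply Proposition~\ref{2.3} to convert ``$\bar A$ is completely nilary'' into ``$I$ is a completely nilary ideal of $A$.'' Both directions of the corollary follow by reading this chain forwards and backwards. I would present it as a short two-sentence proof, deferring the p-nilary quotient fact to wherever it is (or should be) stated, and flag that the only real content beyond bookkeeping is Proposition~\ref{completely:p-nilary} itself.
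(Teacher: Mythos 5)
Your proposal is correct and follows essentially the same route as the paper: the paper's proof is exactly the chain ``$I$ p-nilary $\iff$ $A/I$ p-nilary ring $\iff$ $A/I$ completely nilary ring $\iff$ $I$ completely nilary ideal,'' using Proposition~\ref{completely:p-nilary} for the middle step and the quotient correspondences for the outer ones. You are right to flag the p-nilary quotient correspondence as the one piece needing justification: the paper cites it as ``Proposition~\ref{2.3}(ii)'' even though the stated Proposition~\ref{2.3} covers only the completely nilary case, so your inline lifting argument via principal ideals of $A/I$ actually fills a gap the paper leaves implicit.
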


\begin{proof}
By Proposition~\ref{2.3}(ii), then $I$ is a p-nilary ideal, iff $A/I$ is a
p-nilary ring. By Proposition~\ref{completely:p-nilary}, $A/I$ is a p-nilary
ring iff $A/I$ is a completely nilary ring. By Proposition~\ref{2.3}(iii),
$A/I$ is a completely nilary ring iff $I$ is a completely nilary ideal.
\end{proof}

\begin{proposition}\label{Jabbar:Th. 2.20}
Let $A$ be a ring. If $A$ is a completely nilary ring in which all nil ideals
are nilpotent then it is a nilary ring.
\end{proposition}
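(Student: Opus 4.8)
\emph{Proof proposal.} The task is to check that the zero ideal of $A$ is a nilary ideal: given ideals $J,K\unlhd A$ with $JK=0$, I must produce positive integers $m,n$ with $J^{m}=0$ or $K^{n}=0$. The plan is to show that in either of the two alternatives one of $J,K$ is a \emph{nil} ideal, and then to invoke the standing hypothesis that every nil ideal of $A$ is nilpotent. That hypothesis is precisely what converts an elementwise nilpotency statement into the uniform conclusion $J^{m}=0$ (or $K^{n}=0$), since the ``completely nilary'' assumption by itself gives nilpotence only of individual elements and no control over exponents.

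First I would dispose of the case in which $K$ is nil: then $K$ is nilpotent by hypothesis and there is nothing left to prove. So suppose $K$ is not nil and fix an element $b_{0}\in K$ that is not nilpotent. For an arbitrary $a\in J$ we have $ab_{0}\in JK=0$, hence $ab_{0}=0$; since $A$ is completely nilary, $a$ is nilpotent or $b_{0}$ is nilpotent, and the latter is excluded by the choice of $b_{0}$. Therefore every element of $J$ is nilpotent, i.e.\ $J$ is a nil ideal, and the hypothesis again gives $J^{m}=0$ for some $m\in\mathbb{N}$.

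Combining the two cases, whenever $JK=0$ one of $J$ or $K$ is nilpotent, so the zero ideal of $A$ is a nilary ideal and $A$ is a nilary ring. I do not anticipate a genuine obstacle here: the only point needing care is that complete nilarity yields nilpotence of single elements with no uniform bound, so a direct attempt to bound $J^{m}$ by bounding each $a\in J$ would fail; routing through the nil-implies-nilpotent hypothesis is exactly what circumvents this, which is why that hypothesis is imposed in the statement.
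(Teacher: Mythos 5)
Your proof is correct and is essentially the paper's own argument: assume one factor is not nil, fix a non-nilpotent element in it, use the completely nilary hypothesis on products with elements of the other factor to conclude that factor is nil, and then invoke the nil-implies-nilpotent assumption. The only cosmetic difference is which of the two factors you single out; the structure and the key use of each hypothesis coincide with the paper's proof.
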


\begin{proof}
Let $I$ and $J$ be two ideals of $A$ with $IJ=0$ and suppose that $I$ is not
nilpotent, so by the hypothesis $I$ is not nil which implies that $I$
contains an element $a$ which is not nilpotent. Now, if $b\in J$ is any
element, then $ab\in IJ=0,$ so that $ab=0$ and as $A$ is a completely nilary
ring and $a$ is not nilpotent we must have $b$ is nilpotent. That means every
element of $J$ is nilpotent and so $J$ is a nil ideal and hence it is a
nilpotent ideal so that $A$ is a nilary ring.
\end{proof}

Since a nil left (resp. a nil right) ideal of a left (resp. right) Noetherian
ring is nilpotent and also a nil left (resp. a nil right) ideal of a left
(resp. right) Artinian ring is nilpotent~\cite{Basic1999}, so by combining
these facts with Proposition~\ref{Jabbar:Th. 2.20} we can give the following
corollary.

\begin{corollary}
Let $A$ be a ring. If $A$ is a completely nilary ring and satisfies any one
of the following
conditions then it is a nilary ring.\\
(1) $A$ is a left (resp. a right) Artinian.\\
(2) $A$ is a left (resp. a right) Noetherian.
\end{corollary}

\begin{corollary}
Let $A$ be a ring. If $A$ is a p-nilary ring and satisfies any one of the
following
conditions then it is a nilary ring.\\
(1) $A$ is a left (resp. a right) Noetherian.\\
(2) The sum of any collection of nilpotent ideals of $A$ is nilpotent.
\end{corollary}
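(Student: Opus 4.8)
The plan is to reduce condition~(1) to condition~(2) and then run a single argument for~(2) that parallels the proof of Proposition~\ref{Jabbar:Th. 2.20}, with the word ``nil'' replaced by ``generated by elements whose principal ideals are nilpotent.''

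First I would record that (1) implies (2). If $A$ is left (resp.\ right) Noetherian it has the ascending chain condition on two-sided ideals, so given any family $\{N_\alpha\}$ of nilpotent ideals the directed family of finite subsums $\sum_{\alpha\in F}N_\alpha$ has a maximal member $M$; maximality forces $N_\beta\subseteq M$ for every $\beta$, hence $\sum_\alpha N_\alpha=M$. Since a finite sum of nilpotent two-sided ideals is nilpotent (if $I^p=J^q=0$ then, $I$ and $J$ being two-sided, any word of length $p+q-1$ in $I,J$ has at least $p$ letters $I$ or at least $q$ letters $J$ and so lies in $I^p=0$ or $J^q=0$, whence $(I+J)^{p+q-1}=0$; then induct), $M$ is nilpotent. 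Thus under~(1) the hypothesis of~(2) holds, and it is enough to treat case~(2).

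Next, assume~(2). Let $I,J\unlhd A$ with $IJ=0$ and suppose $I$ is not nilpotent. I claim there is $a\in I$ whose principal ideal $\langle a\rangle$ is not nilpotent: otherwise $\langle a\rangle$ is nilpotent for every $a\in I$, and since $I=\sum_{a\in I}\langle a\rangle$ is then a sum of nilpotent ideals, (2) would make $I$ nilpotent, a contradiction. Fix such an $a$. For each $b\in J$ we have $\langle a\rangle\langle b\rangle\subseteq IJ=0$ (because $\langle a\rangle\subseteq I$ and $\langle b\rangle\subseteq J$), so, $A$ being p-nilary, either $\langle a\rangle$ or $\langle b\rangle$ is nilpotent; as $\langle a\rangle$ is not, $\langle b\rangle$ is nilpotent. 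Hence every principal ideal generated by an element of $J$ is nilpotent, so $J=\sum_{b\in J}\langle b\rangle$ is a sum of nilpotent ideals, and therefore nilpotent by~(2). This shows the zero ideal of $A$ is nilary, i.e.\ $A$ is a nilary ring.

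The delicate points are only bookkeeping: the inclusions $\langle a\rangle\subseteq I$, $\langle b\rangle\subseteq J$ (immediate since $I,J$ are ideals) so that $\langle a\rangle\langle b\rangle\subseteq IJ$, and the observation that ``$\langle a\rangle$ nilpotent'' is precisely the alternative delivered by the p-nilary hypothesis for principal ideals. The one place that takes a little care is the reduction (1)$\Rightarrow$(2), namely the fact that a finite — and hence, via the chain condition, an arbitrary — sum of nilpotent two-sided ideals is nilpotent; once that is granted the two cases collapse to one. (Alternatively, for~(1) one can argue directly: if $I$ is not nilpotent then it is not nil by the nil-implies-nilpotent fact quoted before the previous corollary, so it contains a non-nilpotent $a$, whence $\langle a\rangle$ is non-nilpotent, and as above every $b\in J$ is nilpotent; thus $J$ is a nil two-sided ideal of a one-sided Noetherian ring, hence nilpotent.)
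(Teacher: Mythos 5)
Your argument is correct and follows essentially the same route as the paper: reduce condition (1) to condition (2), and then use that a p-nilary ring in which arbitrary sums of nilpotent ideals are nilpotent must be a nilary ring. The only difference is that the paper disposes of both steps by citation (to a standard reference for (1)$\Rightarrow$(2) and to Proposition 1.4(i) of the Birkenmeier et al.\ paper for the second step), whereas you prove them directly, modelling the second step on the proof of Proposition~\ref{Jabbar:Th. 2.20}; your details (the decomposition $I=\sum_{a\in I}\langle a\rangle$, the inclusion $\langle a\rangle\langle b\rangle\subseteq IJ=0$, and the ACC argument showing a maximal finite subsum absorbs every summand) all check out.
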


\begin{proof}
By~\cite[p.~374]{Basic1999}, and by \cite[Proposition~1.4(i)]{Birkenmeier2013133}, (1) implies
(2), (2) gives that $A$ is a nilary ring and hence any one of the above
conditions gives that $A$ is a nilary ring.
\end{proof}

\begin{corollary} Let $A$ be a ring with unity.\\
If $A$ is a completely nilary ring then either $char(A)=0$ or
$char(A)=p^\beta$ for some positive integer $\beta,$ and $p$ is prime.
\end{corollary}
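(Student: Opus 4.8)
The plan is to argue by contradiction on the characteristic, using the completely nilary hypothesis applied to suitable zero divisors in $\mathbb{Z}\cdot 1_A$. First I would recall that for a ring $A$ with unity, $char(A)$ is the nonnegative generator of the kernel of the canonical ring homomorphism $\mathbb{Z}\to A$, so that the subring $\mathbb{Z}\cdot 1_A$ is isomorphic to $\mathbb{Z}/n\mathbb{Z}$ where $n=char(A)$. If $char(A)=0$ we are done, so I would assume $n=char(A)>0$ and aim to show $n$ is a prime power.

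Suppose toward a contradiction that $n$ is not a prime power; then $n=rs$ with $r,s>1$ and $\gcd(r,s)=1$. Consider the elements $a=r\cdot 1_A$ and $b=s\cdot 1_A$ in $A$. Then $ab=(rs)\cdot 1_A=n\cdot 1_A=0$, so by the completely nilary hypothesis there exist $k,m\in\mathbb{N}$ with $a^k=0$ or $b^m=0$, i.e. $r^k\cdot 1_A=0$ or $s^m\cdot 1_A=0$. In the first case $n\mid r^k$; since $s>1$ has a prime factor $p$ with $p\mid n$ but $p\nmid r$ (as $\gcd(r,s)=1$), we get $p\nmid r^k$, contradicting $n\mid r^k$. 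The second case is symmetric, swapping the roles of $r$ and $s$. Hence $n$ must be a prime power $p^\beta$.

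The step requiring the most care is the reduction to the case $n=rs$ with coprime $r,s>1$ and extracting the prime-divisibility contradiction: one must be sure that ``$n$ is not a prime power'' genuinely yields such a coprime factorization (take $p^\beta$ to be the exact power of one prime $p$ dividing $n$ and let $r=p^\beta$, $s=n/p^\beta$, noting $s>1$ precisely because $n$ has a second prime factor), and that the divisibility $n\mid r^k$ forces every prime factor of $n$ to divide $r$. Everything else is the routine identification of $\mathbb{Z}\cdot 1_A$ and bookkeeping with $1_A$; I would keep those brief. Note that the hypothesis is used exactly once, on the pair $(r\cdot 1_A,\,s\cdot 1_A)$, and the conclusion ``$a$ nilpotent or $b$ nilpotent'' from the definition of completely nilary ring is what drives the whole argument.
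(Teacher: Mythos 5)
Your argument is correct: reducing $1_A$ modulo $n=char(A)$, factoring a non-prime-power $n$ as $n=rs$ with coprime $r,s>1$, applying the completely nilary condition to $(r\cdot 1_A)(s\cdot 1_A)=0$, and deriving a divisibility contradiction from $n\mid r^k$ (or $n\mid s^m$) is exactly the natural proof. The paper states this corollary without any proof, so there is nothing to compare against, but your argument is the standard one that the authors presumably intended, and every step checks out.
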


%%%%%%%%%%%%%%%%%%%%%%%%%%%%%%%%%%%%%%%%%%%%%%%%%%%%%%%%%%%%%%%%%%%%%%%%%%%%%%%%%%%%%%%%%%%%%%%%%%%%%%%%%%
%%%%%%%%%%%%%%%%%%%%%%%%%%%%%%%%%%%%%%%%%%%%%%%%%%%%%%%%%%%%%%%%%%%%%%%%%%%%%%%%%%%%%%%%%%%%%%%%%%%%%%%%%%
%%%%%%%%%%%%%%%%%%%%%%%%%%%%%%%%%%%%%%%%%%%%%%%%%%%%%%%%%%%%%%%%%%%%%%%%%%%%%%%%%%%%%%%%%%%%%%%%%%%%%%%%%%
%%%%%%%%%%%%%%%%%%%%%%%%%%%%%%%%%%%%%%%%%%%%%%%%%%%%%%%%%%%%%%%%%%%%%%%%%%%%%%%%%%%%%%%%%%%%%%%%%%%%%%%%%%
%%%%%%%%%%%%%%%%%%%%%%%%%%%%%%%%%%%%%%%%%%%%%%%%%%%%%%%%%%%%%%%%%%%%%%%%%%%%%%%%%%%%%%%%%%%%%%%%%%%%%%%%%%
%%%%%%%%%%%%%%%%%%%%%%%%%%%%%%%%%%%%%%%%%%%%%%%%%%%%%%%%%%%%%%%%%%%%%%%%%%%%%%%%%%%%%%%%%%%%%%%%%%%%%%%%%%
%%%%%%%%%%%%%%%%%%%%%%%%%%%%%%%%%%%%%%%%%%%%%%%%%%%%%%%%%%%%%%%%%%%%%%%%%%%%%%%%%%%%%%%%%%%%%%%%%%%%%%%%%%
%%%%%%%%%%%%%%%%%%%%%%%%%%%%%%%%%%%%%%%%%%%%%%%%%%%%%%%%%%%%%%%%%%%%%%%%%%%%%%%%%%%%%%%%%%%%%%%%%%%%%%%%%%
%%%%%%%%%%%%%%%%%%%%%%%%%%%%%%%%%%%%%%%%%%%%%%%%%%%%%%%%%%%%%%%%%%%%%%%%%%%%%%%%%%%%%%%%%%%%%%%%%%%%%%%%%%
%%%%%%%%%%%%%%%%%%%%%%%%%%%%%%%%%%%%%%%%%%%%%%%%%%%%%%%%%%%%%%%%%%%%%%%%%%%%%%%%%%%%%%%%%%%%%%%%%%%%%%%%%%
%%%%%%%%%%%%%%%%%%%%%%%%%%%%%%%%%%%%%%%%%%%%%%%%%%%%%%%%%%%%%%%%%%%%%%%%%%%%%%%%%%%%%%%%%%%%%%%%%%%%%%%%%%
%%%%%%%%%%%%%%%%%%%%%%%%%%%%%%%%%%%%%%%%%%%%%%%%%%%%%%%%%%%%%%%%%%%%%%%%%%%%%%%%%%%%%%%%%%%%%%%%%%%%%%%%%%

\section{WEAKLY NILARY}

In this section we introduce a new class of rings, called weakly
(p-)nilary ideal.
\begin{definition}  Let $A$ be a ring and let $I$ be a proper ideal of $A.$\\
We say that the ideal $I$ is a \textit{\textbf{weakly (p-)nilary ideal}} if
whenever $J$ and $K$ are (principal) ideals of $A$ with $0\neq JK\subseteq
I,$ then either $J^{m}\subseteq I$ or $K^{n}\subseteq I$ for some positive
integers $m$ and $n$ depending on $J$ and $K.$
\end{definition}

Clearly every (p-)nilary ideal is a weakly (p-)nilary ideal. But the converse
is not true, as the following example tell us.

\begin{example}Let $A=\mathbb{Z}_{6}$ and $I=\{0\}\unlhd A.$
Note that $I$ is weakly nilary but it is not nilary.
\end{example}

\begin{proposition}Let $I$ an ideal of $A.$ Then:\\
If $A$ is a (p-)nilary ring and $I$ is a weakly (p-)nilary ideal of $A$  then
$I$ is a (p-)nilary ideal of $A.$
\end{proposition}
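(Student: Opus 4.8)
The plan is to verify the defining condition of a (p-)nilary ideal directly, by a case split on whether the relevant product ideal is zero. Concretely, I would take (principal) ideals $J$ and $K$ of $A$ with $JK\subseteq I$ and aim to produce positive integers $m,n$ with $J^{m}\subseteq I$ or $K^{n}\subseteq I$.

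In the first case, where $JK\neq\{0\}$, I would simply invoke the hypothesis that $I$ is weakly (p-)nilary: since $0\neq JK\subseteq I$, that definition applies and hands me $J^{m}\subseteq I$ or $K^{n}\subseteq I$ for some $m,n$ depending on $J,K$.

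In the remaining case $JK=\{0\}$, I would use instead that $A$ is a (p-)nilary ring, i.e.\ that the zero ideal is a (p-)nilary ideal of $A$: applied to the pair $J,K$ (which satisfies $JK\subseteq\{0\}$) it yields $J^{m}\subseteq\{0\}$ or $K^{n}\subseteq\{0\}$ for suitable $m,n$, whence $J^{m}\subseteq I$ or $K^{n}\subseteq I$ since $\{0\}\subseteq I$.

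Combining the two cases shows $I$ satisfies the definition of a (p-)nilary ideal, and the principal ("p-") variant follows by running the same argument with $J$ and $K$ restricted to principal ideals throughout. I do not expect any real obstacle here: the whole content is the dichotomy on $JK=\{0\}$, which is exactly what distinguishes the weak notion from the ordinary one, in analogy with the standard fact that a weakly prime ideal of a prime ring is prime.
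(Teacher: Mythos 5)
Your proposal is correct and matches the paper's own argument essentially verbatim: the same dichotomy on whether $JK=0$, using the weakly (p-)nilary hypothesis in the nonzero case and the (p-)nilary hypothesis on the ring (i.e.\ on the zero ideal) in the zero case. No issues.
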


\begin{proof}Let $J, K\unlhd A,$ with $JK\subseteq I.$ Now we have two cases.

Case 1: If $JK\neq0.$ Since $I$ is a weakly (p-)nilary ideal of $A$ then
$J^n\subseteq I$ or $K^m\subseteq I$ for some positive integers $m$ and $n,$
this implies that $I$ is nilary.

Case 2: If $JK=0.$ Since  $A$ is a (p-)nilary ring, then either  $J^t=0$ or
$K^s=0$ for some positive integers $t$ and $s.$ Hence $J^t=0\subseteq I$ or
$K^s=0\subseteq I$ and so $J^t\subseteq I$ or $K^s\subseteq I.$  Therefore
$I$ is (p-)nilary ideal of $A.$
\end{proof}

\begin{proposition}Let $I$ be a weakly (p-)nilary ideal of a ring $A.$ Then
either $I^2=0$ or $I$ is a (p-)nilary ideal of $A.$
\end{proposition}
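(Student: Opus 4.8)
The plan is to run the classical ``weakly prime $\Rightarrow$ $I^{2}=0$ or prime'' dichotomy in the nilary setting. Assume $I^{2}\neq 0$; I will show $I$ is a (p-)nilary ideal. Let $J,K$ be (principal) ideals of $A$ with $JK\subseteq I$. If $JK\neq 0$, the hypothesis that $I$ is weakly (p-)nilary applies verbatim and gives $J^{m}\subseteq I$ or $K^{n}\subseteq I$, so there is nothing to prove. The only case with content is $JK=0$, and here I argue by contradiction: suppose $J^{m}\not\subseteq I$ and $K^{n}\not\subseteq I$ for every $m,n\in\mathbb{N}$ (in particular $J\not\subseteq I$ and $K\not\subseteq I$).

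The first real step is to show $JI=0$ and, symmetrically, $IK=0$. In the nilary (non-principal) case: if $JI\neq 0$, then $J(K+I)=JK+JI=JI$ is a nonzero ideal contained in $I$, so weak nilary gives $J^{m}\subseteq I$ (excluded) or $(K+I)^{n}\subseteq I$, and the latter forces $K^{n}\subseteq (K+I)^{n}\subseteq I$ (excluded); this contradiction yields $JI=0$, and treating $(J+I)K=IK$ the same way yields $IK=0$. In the p-nilary case (write $J=\langle a\rangle$, $K=\langle b\rangle$) one cannot use $K+I$, since it need not be principal, so when $JI\neq 0$ I would pick $t\in I$ with $\langle a\rangle\langle t\rangle\neq 0$ (possible because $JI\neq 0$) and use $\langle b+t\rangle$ in place of $K+I$: since $\overline{b+t}=\bar b$ in $A/I$ one gets $J\langle b+t\rangle\subseteq I$, while $\langle t\rangle\subseteq\langle b+t\rangle+\langle b\rangle$ gives $\langle a\rangle\langle t\rangle\subseteq J\langle b+t\rangle$, hence $0\neq J\langle b+t\rangle\subseteq I$; then weak p-nilary forces $J^{m}\subseteq I$ (excluded) or $\langle b+t\rangle^{n}\subseteq I$, and the latter descends to $K^{n}=\langle b\rangle^{n}\subseteq I$ (excluded). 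The claim $IK=0$ is symmetric, using $\langle a+s\rangle$ for a suitable $s\in I$.

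With $JK=JI=IK=0$ established, I bring in the assumption $I^{2}\neq 0$: choose $x,y\in I$ with $xy\neq 0$ and consider $(J+I)(K+I)$ in the nilary case, or $\langle a+x\rangle\langle b+y\rangle$ in the p-nilary case. On expanding the product, every cross term lies in one of $JK$, $JI$, $IK$ --- in the principal case the relevant terms are $\langle a\rangle\langle b\rangle=JK=0$, $\langle x\rangle\langle b\rangle\subseteq IK=0$, and $\langle a\rangle\langle y\rangle\subseteq JI=0$ --- and hence vanishes, leaving a product that sits inside $I$ yet dominates $I^{2}$ (respectively, contains $xy\neq 0$); thus it is a nonzero ideal contained in $I$. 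Applying the weak hypothesis one last time forces a power of $J+I$ or of $K+I$ (respectively, of $\langle a+x\rangle$ or of $\langle b+y\rangle$) into $I$, and passing to $A/I$ this says $J^{m}\subseteq I$ or $K^{n}\subseteq I$ --- the final contradiction. Hence in the case $JK=0$ we also conclude $J^{m}\subseteq I$ or $K^{n}\subseteq I$, so $I$ is a (p-)nilary ideal.

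The step I expect to be the main obstacle is the bookkeeping in the p-nilary case: because $J+I$ and $K+I$ need not be principal, each appeal to the weak hypothesis must be routed through a genuinely principal ideal $\langle a+t\rangle$ or $\langle b+t\rangle$, and I must check repeatedly that $\langle c+t\rangle^{m}\subseteq I$ is equivalent to $\langle c\rangle^{m}\subseteq I$ (clear after quotienting by $I$, since $\overline{c+t}=\bar c$) and that the perturbed product still contains the specific nonzero element --- $\langle a\rangle\langle t\rangle$, or $xy$ --- that witnesses nonvanishing. Since $A$ is neither assumed unital nor commutative, I would keep all containments at the level of ideal products $\langle u\rangle\langle v\rangle$ rather than single elements, invoking $xy\neq 0$ only at the very end to certify that $I^{2}\neq 0$ supplies a nonzero product.
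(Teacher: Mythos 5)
Your proof is correct, and its core device is the same as the paper's: perturb one of the factors by something from $I$ so that the product becomes a \emph{nonzero} ideal inside $I$, then apply the weak hypothesis and strip the perturbation off modulo $I$. The differences are worth noting, though. Organizationally, the paper runs a direct three-case analysis ($JI\neq 0$; $JI=0,\ IK\neq 0$; $JI=IK=0$), using the ideals $\langle x\rangle+K$, $\langle y\rangle+J$ and finally $(\langle w\rangle+J)(\langle z\rangle+K)$ with $w,z\in I$, $wz\neq 0$; you instead argue by contradiction, first forcing $JI=IK=0$ and only then invoking $I^{2}\neq 0$ --- logically equivalent, slightly cleaner bookkeeping. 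The more substantive difference is your handling of the p-nilary reading: the paper applies the weakly p-nilary hypothesis to ideals such as $\langle x\rangle+K$ and $\langle w\rangle+J$, which need not be principal, so as written its argument only justifies the nilary half of the statement. Your routing through the genuinely principal ideals $\langle b+t\rangle$, $\langle a+s\rangle$, $\langle a+x\rangle$, $\langle b+y\rangle$, together with the observation that $\langle c+t\rangle^{m}\subseteq I$ iff $\langle c\rangle^{m}\subseteq I$ (pass to $A/I$, where $\overline{c+t}=\bar c$) and that $\langle t\rangle\subseteq\langle b+t\rangle+\langle b\rangle$ keeps the perturbed product nonzero, supplies exactly what the principal case needs; in that sense your write-up repairs a gap in the paper's own proof rather than merely reproducing it.
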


\begin{proof} Assume that $I^2\neq0.$ Let $J, K\unlhd A,$ with $JK\subseteq I.$ We have $JK\neq0,$
then either $J^n\subseteq I$ or $K^m\subseteq I$ for some positive integers
$m$ and $n,$ this implies that $I$ is nilary. So assume that $JK=0,$ now we
have either $JI\neq0$ or $JI=0.$

Case 1: Assume that $JI\neq0,$ then there exists $x\in I$ such that
$Jx\neq0,$ implies that $J\langle x\rangle\neq0.$ Now $0\neq J\langle
x\rangle=J\langle x\rangle+0=J\langle x\rangle+JK=J(\langle
x\rangle+K)\subseteq I$ ( Indeed $J\langle x\rangle\subseteq J\cap\langle
x\rangle\subseteq\langle x\rangle\subseteq I$ and  $x\in I$ ). Hence $0\neq
J(\langle x\rangle+K)\subseteq I.$  Since $I$ is a weakly (p-)nilary ideal,
then $J^t\subseteq I$  or $(\langle x\rangle+K)^s\subseteq I$  for some
positive integers $t$ and $s,$ this implies that $J^t\subseteq I$  or
$K^s\subseteq I$ ( note that $K\subseteq(\langle x\rangle+K)$ implies
$K^s\subseteq(\langle x\rangle+K)^s$ ).

Case 2:  Assume that $JI=0.$ If $IK\neq0,$  then there exists $y\in I$ such
that $yK\neq0$ implies $\langle y\rangle K\neq0.$ Now $0\neq\langle y\rangle
K=\langle y\rangle K+0=\langle y\rangle K+JK=(\langle y\rangle+J)K\subseteq
I,$ hence $\neq0(\langle y\rangle+J)K\subseteq I,$ and since $I$ is a weakly
(p-)nilary ideal, then $(\langle y\rangle+J)^\alpha\subseteq I$ or
$K^\beta\subseteq I$  for some positive integers $\alpha$ and $\beta.$ If
$IK=0.$ Since $I^2\neq0,$ there exist $w, z\in I$ such that $wz\neq0.$
implies $0\neq\langle w\rangle\langle z\rangle.$ Then $$(\langle
w\rangle+J)(\langle z\rangle+K)=\langle w\rangle\langle z\rangle+\langle
w\rangle K+J\langle z\rangle+JK.$$ Now $JK=0.$ Also $\langle w\rangle K=0$
because $IK=0.$ And $J\langle z\rangle=0$ because $JI=0.$ But $\langle
w\rangle\langle z\rangle\neq0.$ Hence $$(\langle w\rangle+J)(\langle
z\rangle+K)=\langle w\rangle\langle z\rangle\subseteq I,$$ implies that
$$0\neq(\langle w\rangle+J)(\langle z\rangle+K)\subseteq I,$$ and since $I$
is a weakly (p-)nilary ideal, then $(\langle w\rangle+J)^\gamma\subseteq I$
or $(\langle z\rangle+K)^\lambda\subseteq I$ for some positive integers
$\gamma$ and $\lambda.$ Hence $J^\gamma\subseteq I$ or $K^\lambda\subseteq
I.$ Therefore $I$ is a (p-)nilary ideal.
\end{proof}

\begin{corollary}
Let $A$ be a semiprime ring with $I\unlhd A$. Then the following two conditions are equivalent:\\
(i) $I$ is a weakly (p-)nilary ideal;\\
(ii) $I=0$ or $I$ is a (p-)nilary ideal.
\end{corollary}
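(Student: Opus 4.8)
The plan is to derive both implications directly from the previous proposition together with the definition of a semiprime ring, so the argument is short. Recall that in a semiprime ring the zero ideal is semiprime, which means that $I^2 = 0$ forces $I = 0$; more precisely, if $I \unlhd A$ satisfies $I^2 = 0$, then $I$ is a nilpotent ideal, hence contained in the prime radical, which is zero in a semiprime ring, so $I = 0$. I would state this observation first (or simply invoke semiprimeness) since it is the bridge between the dichotomy ``$I^2 = 0$ or $I$ is (p-)nilary'' and the cleaner dichotomy ``$I = 0$ or $I$ is (p-)nilary.''

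For $(i) \Rightarrow (ii)$: assume $I$ is a weakly (p-)nilary ideal of the semiprime ring $A$. By the preceding proposition, either $I^2 = 0$ or $I$ is a (p-)nilary ideal of $A$. In the first case, semiprimeness of $A$ gives $I = 0$; in the second case we are already done. Hence $I = 0$ or $I$ is a (p-)nilary ideal.

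For $(ii) \Rightarrow (i)$: if $I$ is a (p-)nilary ideal, then it is a weakly (p-)nilary ideal, since the weakly condition only imposes a requirement on pairs $J, K$ with $0 \neq JK \subseteq I$, a subset of the pairs handled by the (p-)nilary condition (this is the implication ``every (p-)nilary ideal is weakly (p-)nilary'' noted in the text just after the definition). If instead $I = 0$, one checks directly that the zero ideal is vacuously weakly (p-)nilary: the hypothesis $0 \neq JK \subseteq I = 0$ is never satisfied, so the implication holds trivially. Either way, $I$ is weakly (p-)nilary.

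The only point requiring a little care — and the nearest thing to an obstacle — is making sure the zero ideal genuinely qualifies as a \emph{weakly} (p-)nilary ideal and that ``$I$ proper'' in the definition is not an issue (if $A \neq 0$ then $I = 0$ is proper; if $A = 0$ the statement is degenerate). Once that bookkeeping is settled, both directions are immediate from the quoted proposition and the elementary fact that nilpotent ideals vanish in a semiprime ring.
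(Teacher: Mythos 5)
Your proof is correct and follows exactly the route the paper intends: the corollary is stated without proof precisely because it is the preceding proposition (weakly (p-)nilary implies $I^2=0$ or (p-)nilary) combined with the fact that a semiprime ring has no nonzero ideal with $I^2=0$, plus the trivial converse observations you give. Your handling of the vacuous case $I=0$ and the properness issue is a reasonable extra bit of care, but there is nothing genuinely different from the paper's (implicit) argument.
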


Next, we prove that weakly (p-)nilary ideals can be characterized by left as well as
right ideals.

\begin{proposition}\label{prop-right-p}Let $A$ be a ring with unity. The following conditions are equivalent:\\
(i) $L$ is a weakly (p-)nilary ideal of $A;$\\
(ii) let $I, J$ be (principal) right ideals of $A$ with $0\neq IJ\subseteq L,$ then
 either $I^n\subseteq L$ or $J^m\subseteq L$ for some $n,m\in\mathbb{N};$\\
(iii) let $I, J$ be (principal) left ideals of $A$ with $0\neq IJ\subseteq L,$ then
either $I^n\subseteq L$ or $J^m\subseteq L$ for some $n,m\in\mathbb{N}.$
\end{proposition}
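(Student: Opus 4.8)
The plan is to establish the two equivalences $(i)\Leftrightarrow(ii)$ and $(i)\Leftrightarrow(iii)$ separately; since $(i)\Leftrightarrow(iii)$ is obtained from $(i)\Leftrightarrow(ii)$ by the left--right mirror argument (replacing each principal right ideal $xA$ by the principal left ideal $Ax$, and using $(AxA)^{k}=(Ax)^{k}A$ in place of the identity used below), I would only carry out $(i)\Leftrightarrow(ii)$ in detail. The whole argument is built on passing back and forth between a one-sided ideal and the two-sided ideal it generates, using repeatedly that $A$ has a unit, so that $AA=A$, $I\subseteq AI$, $(xA)^{k}A=(xA)^{k}$, and so on.

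For $(i)\Rightarrow(ii)$: given (principal) right ideals $I,J$ of $A$ with $0\neq IJ\subseteq L$, I would pass to the two-sided ideals $AI$ and $AJ$ that they generate; in the principal case, if $I=xA$ then $AI=AxA=\langle x\rangle$, which is again a principal two-sided ideal. Since $I$ is a right ideal, $IA\subseteq I$, whence $(AI)(AJ)=AIAJ\subseteq AIJ\subseteq AL\subseteq L$; and since $1\in A$, $IJ\subseteq(AI)(AJ)$, so in fact $0\neq(AI)(AJ)\subseteq L$. As $L$ is weakly (p-)nilary and $AI,AJ$ are (principal) two-sided ideals, either $(AI)^{n}\subseteq L$ or $(AJ)^{m}\subseteq L$; and then $I\subseteq AI$, $J\subseteq AJ$ give $I^{n}\subseteq L$ or $J^{m}\subseteq L$, which is $(ii)$.

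For $(ii)\Rightarrow(i)$: in the non-principal case this is immediate, because a two-sided ideal is in particular a right ideal, so applying $(ii)$ to a pair of two-sided ideals $J,K$ with $0\neq JK\subseteq L$ gives precisely the condition defining a weakly nilary ideal. The principal case needs one extra observation, since a principal two-sided ideal $\langle a\rangle=AaA$ need not be a principal right ideal and so $(ii)$ cannot be invoked for it directly. Given principal two-sided ideals $\langle a\rangle,\langle b\rangle$ with $0\neq\langle a\rangle\langle b\rangle\subseteq L$, I would instead work with the principal right ideals $aA$ and $bA$: one has $\langle a\rangle\langle b\rangle=A(aA)(bA)$, so $(aA)(bA)\neq 0$, while $(aA)(bA)=aAbA\subseteq\langle a\rangle\langle b\rangle\subseteq L$; hence $(ii)$ yields $(aA)^{n}\subseteq L$ or $(bA)^{m}\subseteq L$. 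To lift this back up, the key identity is $(AaA)^{k}=A(aA)^{k}$ for every $k\geq 1$ --- a one-line induction using $AA=A$ and $(aA)^{k}A=(aA)^{k}$ --- so from $(aA)^{n}\subseteq L$ we get $\langle a\rangle^{n}=A(aA)^{n}\subseteq AL\subseteq L$, and symmetrically for $b$; thus $L$ is weakly p-nilary.

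The only place where care is really needed is this last step: one has to keep in mind that, in a unital ring, a principal right ideal $xA$ and the principal two-sided ideal $AxA$ it generates have comparable powers, in the precise form $(xA)^{k}\subseteq(AxA)^{k}=A(xA)^{k}$, so that a power landing inside $L$ for one of them forces a power inside $L$ for the other. Apart from that, the proof is routine bookkeeping about which inclusions rely on the unit and which rely on the (one- or two-)sidedness of the ideals involved.
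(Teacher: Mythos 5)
Your argument is correct and follows essentially the same route as the paper: for (i)$\Rightarrow$(ii) you pass from the right ideals $I,J$ to the two-sided ideals $AI,AJ$, use $(AI)(AJ)\subseteq AIJ\subseteq L$ and $IJ\subseteq (AI)(AJ)\neq 0$, and pull the conclusion back via $I\subseteq AI$, exactly as the paper does (including the observation that $AI=AxA=\langle x\rangle$ is principal when $I=xA$). The one place you go beyond the paper is (ii)$\Rightarrow$(i), which the paper dismisses as ``Clearly'': you correctly note that in the principal (p-nilary) case $\langle a\rangle=AaA$ need not be a principal right ideal, and you fill this in by working with $aA,bA$ and the identity $(AaA)^{k}=A(aA)^{k}$, which is a genuine detail the paper's proof glosses over.
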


\begin{proof}
(i)$\Rightarrow$(ii), let $I, J$ be (principal) right ideals of $A$ and
$0\neq IJ\subseteq L.$ Hence $$(AI)(AJ)=A(IA)J=A(I)J=AIJ\subseteq AL=L.$$ Since
$IJ\neq0$ then $AIJ\neq0$ this implies that $(AI)(AJ)\neq0.$ Now, since
$L$ is a weakly (p-)nilary ideal, then either $(AI)^n\subseteq L$ or $(AJ)^m\subseteq
L.$ If $(AI)^n\subseteq L$ for some $n,m\in\mathbb{N},$ then
$$I^n\subseteq(AI)^n.$$ Hence $I^n\subseteq L,$ similarly, $J^m\subseteq L.$

(ii)$\Rightarrow$(i) Clearly. Similarly (iii)$\Leftrightarrow$(i).
\end{proof}

\bibliographystyle{amsplain}
\bibliography{xbib}

\end{document}